\title{A Multilevel Approach to Stochastic Trace Estimation\thanks{
This research was supported in part by the National Science Foundation through grant DMS-1745654.}}
\author{Eric Hallman\thanks{North Carolina State University
  (\email{erhallma@ncsu.edu}, \url{https://erhallma.math.ncsu.edu/}).}\and Devon Troester }
\theoremstyle{plain}
  \newtheorem{remark}{Remark}
\DeclareMathOperator*{\trace}{tr}
\DeclareMathOperator*{\prob}{Pr}
\DeclareMathOperator*{\nnz}{nnz}
\newcommand{\mat}[1]{{\bf #1}}
\newcommand{\ts}{^T}
\newcommand{\E}{\mathbb{E}}
\newcommand{\var}{\mathbb{V}}
\begin{document}

\maketitle

\begin{abstract}
    This article presents a randomized matrix-free method for approximating the trace of $f(\mat{A})$, where $\mat{A}$ is a large symmetric matrix and $f$ is a function analytic in a closed interval containing the eigenvalues of $\mat{A}$. Our method uses a combination of stochastic trace estimation (i.e., Hutchinson's method), Chebyshev approximation, and multilevel Monte Carlo techniques. We establish general bounds on the approximation error of this method by extending an existing error bound for Hutchinson's method to multilevel trace estimators. Numerical experiments are conducted for common applications such as estimating the log-determinant, nuclear norm, and Estrada index, and triangle counting in graphs. We find that using multilevel techniques can substantially reduce the variance of existing single-level estimators. 
\end{abstract}

\begin{keywords}
    Spectral function, trace estimation, Chebyshev approximation, Hutchinson's trace estimator,  multilevel Monte Carlo
\end{keywords}

\begin{AMS}
    68W25, 65C05, 65F60, 65F30
\end{AMS}

\section{Introduction}
Given a symmetric matrix $\mat{A}\in \mathbb{R}^{d\times d}$ and a function $f: \mathbb{R}\rightarrow\mathbb{R}$, we consider the problem of estimating 
\begin{equation} \label{def:trace}
    \trace \left(f(\mat{A})\right) = \sum_{i=1}^d f(\lambda_i), 
\end{equation}
where $\lambda_1,\ldots,\lambda_d$ are the eigenvalues of $\mat{A}$. This could in theory be done by computing the eigenvalues of $\mat{A}$, but when $\mat{A}$ is large this option is impractical. A cheaper option is to use stochastic trace estimation, which estimates $\trace \left(f(\mat{A})\right)$ by computing quantities of the form $\mat{z}\ts f(\mat{A})\mat{z}$, where $\mat{z}$ is a random vector. 

Four functions of particular interest are $f(x) = \log (x)$, $f(x) = 1/x$, $f(x) = \exp(x)$, and $f(x) = x^{p/2}$, which correspond respectively to the log-determinant of a matrix, the trace of the inverse, the Estrada index, and the Schatten $p$-norm\footnote{In the latter case, we use $\|\mat{X}\|_p^p = \trace (\mat{X}\ts\mat{X})^{p/2} = \trace f(\mat{A})$, where $\mat{A} = \mat{X}\ts \mat{X}$.}. For these functions it is not practical to compute $\mat{z}\ts f(\mat{A})\mat{z}$ to machine precision, but neither is it necessary for the purpose of estimating the quantity in \eqref{def:trace}. Instead, it suffices to estimate $\mat{z}\ts f(\mat{A})\mat{z}$ by constructing a polynomial or rational approximation to $f$, or by using Lanczos quadrature \cite{ubaru2017fast,golub2009matrices}. The accuracy, and therefore the cost, of these approximations is governed by the accuracy to which one wishes to estimate $\trace \left(f(\mat{A})\right)$. A typical analysis of one of these methods might provide a theorem along the following lines: 

\begin{quote}
    In order to estimate $\trace \left(f(\mat{A})\right)$ to tolerance $\varepsilon$ with failure probability at most $\delta$, sample $\mat{z}\ts f(\mat{A})\mat{z}$ at least $m$ times with a level-$n$ approximation of $f(\mat{A})$. 
\end{quote}
In the above, the term ``level-$n$'' may refer to a degree-$n$ polynomial approximation or an $n$-point quadrature rule\textemdash either way, larger values of $n$ correspond to more accurate and expensive approximations. 

The aim of this article is to provide a general mechanism by which such methods might be improved. 

\subsection{Our approach}
We propose a method for reducing the cost of any stochastic trace estimation technique that approximates quantities of the form $\mat{z}\ts f(\mat{A})\mat{z}$ to variable accuracy. We focus specifically on Chebyshev approximation, but the method may be adapted to use Taylor series, rational approximations, or Lanczos quadrature. It may also be used in conjunction with other variance reduction methods such as those in \cite{meyer2021hutch++}. 

The key idea is that by taking many samples with a crude approximation to $f(\mat{A})$ and a few samples with an accurate approximation to $f(\mat{A})$, we can obtain a better estimate than we would have gotten simply by taking a moderate number of samples with an accurate approximation. This technique is known as {\it multilevel Monte Carlo} \cite{giles2008multilevel}, which was originally developed for path simulation problems and has since found a wide variety of applications including chemical reaction networks \cite{anderson2012multilevel}, aerospace engineering \cite{geraci2017multifidelity}, and rare event estimation \cite{ullmann2015multilevel}. Our application of multilevel techniques to trace estimation is outlined in Section \ref{sec:multilevel}.

In applying multilevel techniques to trace estimation problems, the user must choose how to set the levels: how crude should a ``crude'' approximation to $f(\mat{A})$ be, and how many different approximations should be used? Under a certain framework it turns out that these questions have an optimal answer, summarized by Theorem \ref{thm:levels}. Based on this theorem, we propose a method for selecting the levels automatically based on a pilot sample. 

We also show that existing error bounds for trace estimation using Hutchinson's method may be extended to multilevel methods. This result is presented in Theorem \ref{thm:dethm}, which offers a general framework for deriving $(\delta,\epsilon)$-type error guarantees for multilevel estimators. 

Numerical experiments show that the multilevel estimator can have a significantly smaller variance than the single-level estimator, particularly on nuclear norm estimation problems. We also consider the problem of triangle counting in graphs, and show that using a certain set of control variates can modestly reduce the variance of existing trace estimates at minimal additional cost. 

\subsection{Summary of contributions}
The key contributions of this article are as follows. 
\begin{itemize}
    \item Equations \eqref{def:qlevels} and \eqref{eqn:decomposition} show how multilevel Monte Carlo techniques may be applied to stochastic trace estimation problems. 
    \item Using Theorem \ref{thm:levels}, we propose a method for selecting levels automatically and without the need for additional user input. 
    \item Theorem \ref{thm:dethm} extends existing error bounds for single-level trace estimators to the multilevel framework. 
    \item In Section \ref{sec:triangle} we propose a related variance reduction technique for estimating the number of triangles in a graph. 
    \item Numerical experiments in Section \ref{sec:experiments} demonstrate the practical benefit of our methods. 
\end{itemize}

\subsection{Outline}
Section \ref{sec:background} provides background on stochastic trace estimation, Chebyshev interpolation, and multilevel Monte Carlo methods. Section \ref{sec:multilevel} describes how multilevel methods may be applied to trace estimation and provides a procedure for selecting the parameters for the multilevel estimator. Section \ref{sec:errbounds} generalizes an existing error bound for single-level estimators to the multilevel case. Section \ref{sec:experiments} contains the results of numerical experiments on real data, and Section \ref{sec:conclusion} offers our concluding remarks. 

\subsection{Notation}
Matrices, vectors, integers, and scalars will typically be denoted as $\mat{A}$, $\mat{a}$, $a$, and $\alpha$, respectively, with $\mat{I}$ denoting the identity matrix. The expressions $\mathbb{E}[X]$ and $\mathbb{V}[X]$ respectively denote the expected value and variance of a random variable $X$. 
The trace of a matrix $\mat{A}$ is $\trace(\mat{A})$ and $\|\mat{A}\|_F$ and $\|\mat{A}\|_2$ are its Frobenius and operator norms, respectively. If $\mat{A}\in\mathbb{R}^{d\times d}$ is a symmetric matrix with spectral decomposition $\sum_{i=1}^d\lambda_i\mat{q}_i\mat{q}_i\ts$, then for a real-valued function $f$ we define $f(\mat{A}) = \sum_{i=1}^df(\lambda_i)\mat{q}_i\mat{q}_i\ts$. 

\section{Background} \label{sec:background}
Here we review Hutchinson's method, Chebyshev approximation, and multilevel Monte Carlo methods. For more background on these topics, see e.g. \cite{han2017approximating} and \cite{giles2015multilevel}. 

\subsection{Hutchinson's method}
A common method for trace estimation relies on the following theorem \cite{hutchinson1989stochastic}: 
\begin{theorem}
Let $\mat{A} \in \mathbb{R}^{d\times d}$ be a symmetric matrix, and let $\mat{z}\in \mathbb{R}^d$ be a random variable such that $\mathbb{E}[\mat{z}\mat{z}\ts] = \mat{I}$. Then \[\E[\mat{z}\ts f(\mat{A})\mat{z}] = \trace \left(f(\mat{A})\right).\]
\end{theorem}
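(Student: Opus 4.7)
My plan is to exploit the fact that $\mat{z}\ts f(\mat{A})\mat{z}$ is a scalar, so it equals its own trace, and then apply the cyclic property of the trace together with linearity of expectation. Writing $f(\mat{A})$ in its spectral form as defined in the Notation subsection is not necessary for this argument; the only property of $f(\mat{A})$ that is used is that it is a fixed (deterministic) matrix in $\mathbb{R}^{d\times d}$.

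The first step is to rewrite
\[
    \mat{z}\ts f(\mat{A})\mat{z} = \trace\!\left(\mat{z}\ts f(\mat{A})\mat{z}\right) = \trace\!\left(f(\mat{A})\mat{z}\mat{z}\ts\right),
\]
where the first equality uses that the left-hand side is a $1\times 1$ matrix and the second uses the cyclic property of the trace. The second step is to take expectations on both sides and push the expectation inside the trace, which is permissible because the trace is a linear functional and $\mat{A}$ is deterministic:
\[
    \E[\mat{z}\ts f(\mat{A})\mat{z}] = \E\!\left[\trace\!\left(f(\mat{A})\mat{z}\mat{z}\ts\right)\right] = \trace\!\left(f(\mat{A})\,\E[\mat{z}\mat{z}\ts]\right).
\]
The final step is to substitute the hypothesis $\E[\mat{z}\mat{z}\ts] = \mat{I}$, which yields $\trace(f(\mat{A})\mat{I}) = \trace(f(\mat{A}))$.

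There is no real obstacle here; the argument is three lines long and the only mild point to justify is the interchange of expectation and trace, which is immediate because the trace is just a finite sum of entries and linearity of expectation applies entrywise. Note that no assumption about the distribution of $\mat{z}$ is required beyond the second-moment condition $\E[\mat{z}\mat{z}\ts]=\mat{I}$, which is exactly what makes this identity so useful as the basis for Hutchinson-type estimators in the sequel.
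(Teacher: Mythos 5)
Your proof is correct and is the standard argument: the paper itself states this result without proof, citing Hutchinson (1989), and the scalar-equals-its-own-trace plus cyclic-property plus linearity-of-expectation chain you give is exactly the canonical derivation. Nothing is missing; the only hypothesis used is $\E[\mat{z}\mat{z}\ts]=\mat{I}$, as you note.
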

If we generate random samples $\mat{z}^{(1)},\ldots,\mat{z}^{(m)}$ from a Rademacher distribution (entries $\pm 1$ with equal probability), the {\it Hutchinson estimator} is then given by 
\begin{equation}\label{def:mcEstimate}
    \Gamma_m = \frac{1}{m}\sum_{i=1}^{m} \mat{z}^{(i)T} f(\mat{A}) \mat{z}^{(i)}. 
\end{equation}
Ideally, we will be able to compute or estimate each term $\mat{z}^{(i)T} f(\mat{A})\mat{z}^{(i)}$ with only a small number of matrix-vector products with $\mat{A}$. Thus if $\mat{A}$ is sparse or otherwise permits fast matrix-vector multiplication, the estimator in \eqref{def:mcEstimate} will be cheap to compute. 

\begin{example}
If $\mat{A}$ is the $\{0,1\}$-valued adjacency matrix for a graph, the number of triangles in the graph is equal to $\frac{1}{6}\trace(\mat{A}^3)$. Each term of the form $\mat{z}^{(i)T} \mat{A}^3\mat{z}^{(i)}$ may be evaluated using only three matrix-vector products\footnote{Or two, if the symmetry of the quadratic form is exploited.}, and so stochastic trace estimation allows us to estimate the number of triangles in a graph without having to compute $\mat{A}^3$ explicitly. 
\end{example}

In the case where $f(\mat{A})$ is symmetric positive semi-definite (SPSD), the following error guarantee for the Hutchinson estimator is derived in \cite{roosta2015improved}:
\begin{theorem}[Roosta-Khorasani/Ascher]\label{thm:hutch}
Let $\mat{A}$ be SPSD. For a given pair $(\varepsilon,\delta)$ of positive numbers, the bound 
    \[|\Gamma_m - \trace (\mat{A})|\leq \varepsilon \trace (\mat{A})\]
    holds with failure probability at most $\delta$ if $ m \geq 6\varepsilon^{-2}\ln(2/\delta)$.
\end{theorem}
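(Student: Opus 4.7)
The plan is to bound the two-sided tail $\mathbb{P}(|\Gamma_m - \trace(\mat{A})| > \varepsilon\trace(\mat{A}))$ by a Chernoff-style argument and then invert the resulting inequality to solve for $m$. First I would note that the $m$ samples $\mat{z}^{(i)T}\mat{A}\mat{z}^{(i)} - \trace(\mat{A})$ are i.i.d.\ with mean zero, so the moment generating function of the sum $m(\Gamma_m - \trace(\mat{A}))$ factors as $\mathbb{E}[\exp(s(\mat{z}\ts\mat{A}\mat{z} - \trace(\mat{A})))]^m$. Applying Markov's inequality to $\exp(sm(\Gamma_m-\trace(\mat{A})))$ and optimizing over $s>0$ gives the standard Chernoff template; a symmetric argument with $-s$ together with a union bound produces the factor of $2$ that becomes the $\ln(2/\delta)$ in the final sample complexity.

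Next I would bound the single-sample MGF $M(s) = \mathbb{E}[\exp(s(\mat{z}\ts\mat{A}\mat{z} - \trace(\mat{A})))]$. Since $z_i^2 = 1$ for a Rademacher vector, the diagonal part of $\mat{z}\ts\mat{A}\mat{z}$ equals $\trace(\mat{A})$ exactly, so the centered random variable reduces to the off-diagonal Rademacher chaos $2\sum_{i<j}z_iz_j A_{ij}$. The cleanest route is to diagonalize $\mat{A} = \mat{U}\mat{\Lambda}\mat{U}\ts$ with nonnegative eigenvalues $\lambda_i$ and work with $\mat{y} = \mat{U}\ts\mat{z}$; I would use the decoupling/conditioning trick that conditions on all but one coordinate at a time and uses the Rademacher sub-Gaussian bound $\mathbb{E}[e^{sz_i c}] \le e^{s^2c^2/2}$ to obtain an MGF estimate of the form $M(s) \le \exp(C s^2 \sum_i \lambda_i^2)$ valid for $|s|$ below an explicit threshold determined by $\|\mat{A}\|_2$.

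The crucial SPSD step is then the spectral inequality $\sum_i \lambda_i^2 = \|\mat{A}\|_F^2 \le (\sum_i \lambda_i)^2 = \trace(\mat{A})^2$, which replaces $\|\mat{A}\|_F^2$ by $\trace(\mat{A})^2$ and lets the relative tolerance $\varepsilon\trace(\mat{A})$ absorb the scale. Substituting this into the Chernoff bound, optimizing $s$, and collecting constants yields a bound of the form $2\exp(-c m\varepsilon^2)$, from which $m \ge c^{-1}\varepsilon^{-2}\ln(2/\delta)$ suffices.

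The main obstacle, and the only place where real care is needed, is pinning down the constant $c$ so that it is no worse than $1/6$: the MGF bound for a second-order Rademacher chaos is not as sharp as for a Gaussian chaos, and one must track the constant through both the quadratic-form MGF estimate and the optimization over $s$ to verify that $6$ (rather than some larger number) is admissible. Everything else\textemdash the symmetrization, factoring the MGF of the empirical average, and the SPSD spectral inequality\textemdash is essentially mechanical once this constant is in hand.
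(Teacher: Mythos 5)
First, note that the paper offers no proof of this statement at all: it is quoted verbatim from Roosta-Khorasani and Ascher \cite{roosta2015improved}, so the comparison must be against the proof in that reference. Their argument is not the off-diagonal-chaos (Hanson--Wright) argument you sketch. They diagonalize $\mat{A}=\mat{U}\mat{\Lambda}\mat{U}\ts$ and write $\mat{z}\ts\mat{A}\mat{z}/\trace(\mat{A})=\sum_j t_j(\mat{u}_j\ts\mat{z})^2$ with $t_j=\lambda_j/\trace(\mat{A})$; SPSD-ness enters structurally here, because it makes $\{t_j\}$ a convex combination, so Jensen applied to $x\mapsto e^{sx}$ bounds the moment generating function by that of a single $(\mat{u}\ts\mat{z})^2$. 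An Achlioptas-type moment-domination lemma (the even moments of a Rademacher projection $\mat{u}\ts\mat{z}$ onto a unit vector are dominated by those of a standard Gaussian) then reduces everything to scalar $\chi^2_m$ tail bounds, and the constant $6$ falls out of the elementary inequality $\tfrac{\varepsilon^2}{2}-\tfrac{\varepsilon^3}{3}\geq\tfrac{\varepsilon^2}{6}$ for $\varepsilon\leq 1$. Your route uses positive semidefiniteness only at the very end, via $\|\mat{A}\|_F\leq\trace(\mat{A})$, which is a strictly weaker exploitation of the hypothesis.

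The genuine gap is the constant, and here it is not a bookkeeping detail: the entire content of the theorem beyond the generic $O(\varepsilon^{-2}\ln(1/\delta))$ rate is that $6$ suffices, and you explicitly leave unverified that your $c$ is at least $1/6$. Worse, the route you chose cannot plausibly deliver it. A Hanson--Wright-type bound necessarily carries an operator-norm term in the sub-exponential regime (compare Theorem \ref{thm:kressner} in this very paper, whose denominator is $8\|\mat{A}\|_F^2+8\varepsilon\|\mat{A}\|_2$); substituting $\|\mat{A}\|_F,\|\mat{A}\|_2\leq\trace(\mat{A})$ yields $m\geq 8(1+\varepsilon)\varepsilon^{-2}\ln(2/\delta)$, which is strictly weaker than the claimed $6\varepsilon^{-2}\ln(2/\delta)$. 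There is also an internal inconsistency in your MGF step: after passing to $\mat{y}=\mat{U}\ts\mat{z}$ the coordinates are no longer independent Rademacher variables, so the per-coordinate sub-Gaussian bound $\E[e^{sz_ic}]\leq e^{s^2c^2/2}$ does not apply to $\mat{y}$; and in the original coordinates, conditioning on all but one coordinate does not linearize a second-order chaos without a genuine decoupling or comparison argument, which you do not supply. To actually prove the stated theorem you would need to switch to the convexity-plus-Gaussian-domination argument above (or reproduce it from \cite{roosta2015improved}).
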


\subsection{Chebyshev interpolation}
We consider Chebyshev polynomials of the first kind, which follow the recurrence relation 
\begin{equation}\label{eqn:recurrence}
    T_{j+1}(x) = 2xT_j(x) - T_{j-1}(x), \quad j = 1,2,\ldots
\end{equation}
with $T_0(x) = 1$ and $T_1(x) = x$. A given function $f:[-1,1]\rightarrow\mathbb{R}$ can then be approximated by the degree-$n$ interpolating polynomial
\begin{equation}\label{eqn:chebSeries}
    f(x) \approx p_n(x) = \sum_{j=0}^n c_{j}T_j(x). 
\end{equation}
The interpolating nodes for $p_n$ are given\footnote{Other variants exist; see \cite{trefethen2008gauss} for details.} by
\begin{equation}
    x_{j} = \cos \frac{j\pi}{n}, \quad 0\leq j \leq n, 
\end{equation}
and the coefficients $c_{j}$ can be elegantly computed using a fast Fourier transform \cite{trefethen2008gauss}. 

Since $p_n$ interpolates $f$ on the interval $[-1,1]$, it follows that $p_n(\mat{A})$ will be a good approximation to $f(\mat{A})$ if the spectrum of $\mat{A}$ lies in the interval $[-1,1]$. For a matrix whose spectrum lies in $[a,b]$, we can find an affine function $g$ that maps $[a,b]$ to $[-1,1]$, define 
\[\tilde{f} = f\circ g^{-1}, \quad \widetilde{\mat{A}} = g(\mat{A}),\]
and approximate $\trace (\tilde{f}(\widetilde{\mat{A}}))$ using a Chebyshev interpolation of $\tilde{f}$. We can therefore assume without loss of generality that the spectrum of $\mat{A}$ is contained in $[-1,1]$, although doing so requires at least a rough estimate of the maximum and minimum eigenvalues of $\mat{A}$.  

Expressions of the form $\mat{z}\ts p_n(\mat{A})\mat{z}$ can then be evaluated by computing $\mat{z}_n = p_n(\mat{A})\mat{z}$ using the recurrence in \eqref{eqn:recurrence}, then returning $\mat{z}\ts \mat{z}_n$. Details can be found in \cite{han2017approximating}, and the process requires $n$ matrix-vector products (matvecs) with $\mat{A}$. It is observed in \cite{hallman2021faster} that by exploiting the symmetry of the quadratic form the number of matvecs can be reduced to $\lceil n/2\rceil$.

\subsection{Multilevel Monte Carlo}
Given a sequence $P_1,\ldots,P_{L-1}$ of random variables approximating $P_L$ with increasing accuracy, the quantity of interest $\E[P_L]$ can be rewritten as the telescoping sum
\begin{equation}\label{eqn:telescope}
    \E[P_L] = P_1 + \sum_{k=2}^L \E[P_k - P_{k-1}].
\end{equation}
We can then estimate $\E[P_L]$ by estimating each term on the right hand side independently. The insight of multilevel Monte Carlo methods is that if the low-level terms in \eqref{eqn:telescope} are cheap to compute and the high-level terms have small variance, this strategy can be more efficient than sampling $P_L$ alone \cite{giles2008multilevel,giles2015multilevel}.  

Let $C_1$ and $m_1$ denote the cost of computing a single sample of $P_1$ and the number of times it was sampled, and let $V_1 = \var[P_1]$. Similarly, for $2\leq k \leq L$ let $C_k$ and $m_k$ respectively denote the cost of a single sample of $P_k - P_{k-1}$ and the number of times it was sampled, and let $V_k = \var[P_k - P_{k-1}]$. 
For a fixed variance $\varepsilon^2$, the total cost $C$ is minimized by setting 
\begin{align}
    m_k &= \mu \sqrt{V_k/C_k}, \quad \text{where} \label{eqn:optSamples}\\
    \mu &= \varepsilon^{-2}\sum_{k=1}^L\sqrt{V_k C_k}. \label{eqn:optMu}
\end{align}
The total cost of the estimate is therefore given by
\begin{equation}\label{eqn:totalCost}
    C = \sum_{k = 1}^L m_k C_k 
    =
    \varepsilon^{-2}\left(\sum_{k=0}^L\sqrt{V_k C_k}\right)^2. 
\end{equation}
If a computational budget $C$ is prescribed rather than a target variance, then the formula for $\mu$ will change but equations \eqref{eqn:optSamples} and \eqref{eqn:totalCost} will still hold. Note that the solution in \eqref{eqn:optSamples} is only an approximation as it allows the terms $m_k$ to take on non-integer values. 

Giles \cite{giles2015multilevel} observes that if the terms $V_k C_k$ are generally decreasing with $k$ then the first term $V_1C_1$ will make the largest contribution to the overall cost. If this is the case, we can hope to attain a total cost along the lines of $C\approx \varepsilon^{-2}V_1C_1$, as opposed to the standard cost $C\approx \varepsilon^{-2}V_1C_L$ that would come from estimating $P_L$ alone.

\section{Multilevel trace estimation} \label{sec:multilevel}
The form of the interpolating polynomial in \eqref{eqn:chebSeries} suggests a natural way to apply multilevel techniques to Chebyshev approximation. For a fixed degree $n$ for the interpolant $p_n$ and indices $-1\leq \ell' < \ell \leq n$, we define the variables 
\begin{equation} \label{def:qlevels}
        Q_{\ell'\ell} = \sum_{j=\ell'+1}^{\ell} c_j \mat{z}\ts T_j(\mat{A})\mat{z}.
\end{equation}
Given a sequence $0\leq \ell_1 < \ell_2< \cdots < \ell_L = n$ we obtain the decomposition
\begin{equation} \label{eqn:decomposition}
    \mat{z}\ts p_n(\mat{A})\mat{z} = \sum_{k=1}^L Q_{\ell_{k-1}\ell_k},
\end{equation}
where for convenience we will always take $\ell_0$ to be equal to $-1$. Thus a choice of levels $\{\ell_k\}_{k=1}^L$ corresponds to a partition of $\mat{z}\ts p_n(\mat{A})\mat{z}$ into $L$ parts, each of which is a sum of consecutive terms in the polynomial. 

Altering the notation of the previous section somewhat, we define $V_{\ell'\ell}$ and $C_{\ell'\ell}$ to be the variance and cost of estimating $Q_{\ell'\ell}$. The basic framework for the multilevel method is then as follows: we choose a set of levels $\{\ell_k\}_{k=1}^L$, then take a pilot sample to estimate the variance at each level. Given a desired variance $\varepsilon^2$ or computational budget $C$, we then use \eqref{eqn:optSamples} and \eqref{eqn:optMu} to determine the optimal number of samples $m_k$ for each level.

\subsection{Cost estimates}\label{sec:costs}
For Chebyshev interpolation, the cost of a sample will be more or less proportional to the number of matvecs required. The cost of sampling $Q_{\ell'\ell}$ can therefore be modeled as $\ell$ if we use the methods of \cite{han2017approximating}, or as $\lceil \ell/2\rceil$ if we exploit the symmetry of the quadratic form as in \cite{hallman2021faster}. 

\subsection{Optimal level selection}\label{sec:levelSelection}

Considering the form of the cost in \eqref{eqn:totalCost}, it is critical to note that using a large number of levels may be counterproductive, particularly if the corresponding variances decay slowly. A judicious choice of levels is therefore necessary if we want our multilevel method to outperform the single-level estimator. Here we present a method for choosing the levels with the aim of minimizing the total cost as given in \eqref{eqn:totalCost}. 

Recalling that the approximate cost of the multilevel method is given by \eqref{eqn:totalCost}, we define for $0\leq \ell \leq n$ the variables 
\begin{equation}\label{eqn:levelCost}
    \mathcal{C}_\ell := \min_{L}\min_{\{\ell_k\}}  \sum_{k=1}^L \sqrt{V_{\ell_{k-1}\ell_k}C_{\ell_{k-1}\ell_k}},
\end{equation}
where the minimization is taken over indices satisfying $0\leq \ell_1 < \cdots < \ell_L = \ell$. In particular, $\mathcal{C}_n$ corresponds to the optimal multilevel cost of approximating $\mat{z}\ts p_n(\mat{A})\mat{z}$. Our goal is to find the set of levels corresponding to this optimal cost. 

With perfect information about the variances and costs, it turns out that we can efficiently find the set of levels corresponding to $\mathcal{C}_n$ through dynamic programming. We summarize this finding in the form of the following theorem. 

\begin{theorem}\label{thm:levels}
  For $0\leq \ell\leq n$, let $\mathcal{C}_{\ell}$ be defined as in \eqref{eqn:levelCost}. Then $\mathcal{C}_n$ can be computed by the recurrence
\begin{equation}\label{eqn:costrecurrence}
    \mathcal{C}_{\ell} = \begin{cases}0 & \ell = 0, \\
    \min_{0\leq \ell' < \ell}\ \mathcal{C}_{\ell'} + \sqrt{V_{\ell'\ell}C_{\ell'\ell}} & 1\leq \ell \leq n. 
    \end{cases}
\end{equation}
\end{theorem}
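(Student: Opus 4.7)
The plan is to prove the recurrence by induction on $\ell$ using the principle of optimality, the standard correctness argument for shortest-path-style dynamic programs. Viewed geometrically, place a vertex at each integer in $\{-1, 0, 1, \ldots, n\}$, draw a directed edge from $\ell'$ to $\ell$ of weight $\sqrt{V_{\ell'\ell}C_{\ell'\ell}}$ whenever $\ell' < \ell$, and observe that $\mathcal{C}_\ell$ is the minimum total weight of a directed path from $-1$ to $\ell$. Equation \eqref{eqn:costrecurrence} is then just the Bellman equation for this shortest-path problem.

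The base case $\mathcal{C}_0 = 0$ is immediate from $C_{-1, 0} = 0$: the only admissible block $Q_{-1, 0} = c_0 \mat{z}\ts\mat{z}$ involves only $T_0(\mat{A}) = \mat{I}$ and so requires no matrix-vector products, giving cost $\sqrt{V_{-1, 0} \cdot 0} = 0$. For the inductive step, assume the recurrence holds for all indices strictly smaller than $\ell \geq 1$, and prove the two inequalities separately. For the upper bound: fix any $\ell'$ with $0 \leq \ell' < \ell$; the inductive hypothesis supplies a partition $\{\tilde\ell_1 < \cdots < \tilde\ell_{L'} = \ell'\}$ achieving $\mathcal{C}_{\ell'}$, and appending $\ell$ produces a valid partition of $[-1, \ell]$ with total cost $\mathcal{C}_{\ell'} + \sqrt{V_{\ell'\ell}C_{\ell'\ell}}$; minimizing over $\ell'$ gives the desired bound. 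For the lower bound: let $\{\ell_1^* < \cdots < \ell_L^* = \ell\}$ be any partition witnessing $\mathcal{C}_\ell$; the prefix $\{\ell_1^*, \ldots, \ell_{L-1}^*\}$ is an admissible partition for $\mathcal{C}_{\ell_{L-1}^*}$, so its cost is at least $\mathcal{C}_{\ell_{L-1}^*}$, whence the total cost is at least $\mathcal{C}_{\ell_{L-1}^*} + \sqrt{V_{\ell_{L-1}^*, \ell}C_{\ell_{L-1}^*, \ell}}$, which is bounded below by the right-hand side of \eqref{eqn:costrecurrence}.

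The main obstacle, or rather the only subtle bookkeeping point, is the boundary between the convention $\ell_0 = -1$ and the recurrence's requirement $\ell' \geq 0$: this becomes relevant when a witnessing partition has $L = 1$, so that $\ell_{L-1}^* = -1$ falls outside the range of the min in \eqref{eqn:costrecurrence}. I would handle this by the observation that any single-block partition $Q_{-1, \ell}$ can be split, with no change in cost, into the pair $Q_{-1, 0} + Q_{0, \ell}$: since $\mat{z}\ts\mat{z}$ is deterministic for Rademacher samples one has $V_{-1, \ell} = V_{0, \ell}$, and since $T_0 = \mat{I}$ requires no matvecs one has both $C_{-1,0} = 0$ and $C_{-1, \ell} = C_{0, \ell}$. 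Thus every $L = 1$ partition is equivalent to an $L = 2$ partition with $\ell_1 = 0$, which lies within the scope of the recurrence, and the induction closes.
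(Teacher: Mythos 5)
Your proof is correct. The paper in fact states Theorem~\ref{thm:levels} without any proof at all---it is presented as a summary of a standard dynamic-programming observation---so there is no authorial argument to compare against; yours is the natural principle-of-optimality (Bellman equation) argument that the authors evidently had in mind. You also correctly identify and resolve the one point that genuinely needs care: the $L=1$ partition, whose predecessor level is the conventional $\ell_0=-1$ and hence falls outside the range of the minimum in \eqref{eqn:costrecurrence}. Your resolution---that $Q_{-1,0}=c_0\mat{z}\ts\mat{z}$ is deterministic for Rademacher vectors and costs no matvecs under the cost model of Section~\ref{sec:costs}, so the single-block partition is cost-equivalent to the two-block partition $\{0,\ell\}$ and the base case $\mathcal{C}_0=0$ is consistent with the definition \eqref{eqn:levelCost}---is exactly right and is the only substantive content such a proof requires. (A minor stylistic remark: your upper- and lower-bound arguments use only the definition of $\mathcal{C}_{\ell'}$ as a minimum over partitions, not the inductive hypothesis that the recurrence holds for smaller indices, so the induction scaffolding could be dropped entirely; this does not affect correctness.)
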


Assuming we already know the variances $V_{\ell'\ell}$, Theorem \ref{thm:levels} implies that we can compute $\mathcal{C}_n$ in $\mathcal{O}(n^2)$ time. The optimal levels associated with $\mathcal{C}_n$ can be obtained at minimal extra cost. Since $n$ is small compared to the size of $\mat{A}$, determining the optimal levels will be inexpensive compared to the overall cost of trace estimation. 

\subsubsection{Application to Chebyshev interpolation}
In applying the level selection method of Theorem \ref{thm:levels} to Chebyshev interpolation, we face two complications. The first is that we do not have prior knowledge of the variances and so must estimate them. The second is that equations \eqref{eqn:optSamples} and \eqref{eqn:optMu} assume that the sample sizes $\{m_k\}_{k=1}^L$ may take on non-integer values. As a result, our method as described runs the risk of selecting too many levels and recommending $0< m_k \ll 1$ for the more expensive levels. 

We propose to estimate the variances by taking a pilot sample. For $1\leq i \leq m_\text{pilot}$ we compute the terms $\{c_j\mat{z}^{(i)T} T_j(\mat{A})\mat{z}^{(i)}\}_{j=0}^n$, storing them in a matrix of size $m_\text{pilot}\times (n+1)$. The variances can then be estimated from this information in $\mathcal{O}(n^2m_\text{pilot})$ time, which will generally be small compared to the overall cost of trace estimation. The pilot samples may subsequently be reused for the trace estimate. 

\begin{remark}
An alternate method might be to use the Chebyshev coefficients to bound the variances at each level. We tried but ultimately rejected this approach, as the resulting bounds were too pessimistic. Performance improved when we made the assumption that $\mat{z}\ts T_j(\mat{A})\mat{z}$ and $\mat{z}\ts T_{j'}(\mat{A})\mat{z}$ were uncorrelated whenever $j\neq j'$, but it is not clear whether this assumption is realistic enough to be reliable. 
\end{remark}


To resolve the issue of the sample sizes taking on non-integer values, we make the following modification: for $0\leq \ell \leq n-1$ we compute $\mathcal{C}_{\ell}$ using the recursion in \eqref{eqn:costrecurrence}, but when computing $\mathcal{C}_n$ we add the additional constraint that the number of samples recommended for the highest level should be at least $m_{\text{pilot}}$. To obtain a ``recommended'' number of samples, we require either a target variance $\varepsilon^2$ or a computational budget $C$. 

\section{Error bounds for multilevel methods} \label{sec:errbounds}
In this section, we derive error guarantees for multilevel methods. Given a set of levels $\{\ell_k\}_{k=1}^L$ and sample sizes $\mat{m} = \{m_k\}_{k=1}^L$, we define the estimator
\begin{equation}\label{eqn:multiEstimator}
    \Gamma_{\mat{m}} = \sum_{k = 1}^L \sum_{i=1}^{m_k} \frac{1}{m_k}Q_{\ell_{k-1}\ell_k}^{(i,k)},
\end{equation}
where the $(i,k)$ superscripts denote independent samples. Alternately, we may define for $1\leq k \leq L$ the matrices 
\begin{equation}\label{def:bmat}
    \mat{A}_k = \sum_{j=\ell_{k-1}+1}^{\ell_k}c_jT_j(\mat{A}).
\end{equation}
Then $p_n(\mat{A}) = \sum_{k=1}^L \mat{A}_k$, and we can express the multilevel estimator in the form
\begin{equation}\label{eqn:multiEstimator2}
    \Gamma_{\mat{m}} = \sum_{k = 1}^L \sum_{i=1}^{m_k} \frac{1}{m_k}\mat{z}^{(i,k)T}\mat{A}_k\mat{z}^{(i,k)},
\end{equation}
where the $\mat{z}^{(i,k)}$ are independently drawn Rademacher vectors. We can then obtain bounds on the accuracy of the estimator $\Gamma_{\mat{m}}$ by using the following theorem, due to \cite{cortinovis2020randomized}: 
\begin{theorem}[Cortinovis/Kressner] \label{thm:kressner}
    Let $\mat{z}\in\mathbb{R}^d$ be a Rademacher vector and let $\mat{A}\in\mathbb{R}^{d\times d}$ be a nonzero symmetric matrix with all-zero diagonal entries. Then for all $\varepsilon > 0$, 
    \begin{equation}\label{eqn:kressnerBound}
        \prob\left(|\mat{z}\ts \mat{A}\mat{z}|\geq \varepsilon\right) \leq 2\exp\left(-\frac{\varepsilon^2}{8\|\mat{A}\|_F^2 + 8\varepsilon\|\mat{A}\|_2}\right). 
    \end{equation}
\end{theorem}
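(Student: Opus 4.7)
The plan is to prove this as a Hanson-Wright style Bernstein inequality via the standard Chernoff / moment-generating function (MGF) route, exploiting the diagonal-free hypothesis to guarantee centering and to enable decoupling. First, observe that because $\mat{z}$ is Rademacher we have $\E[z_iz_j] = \delta_{ij}$, so the zero-diagonal assumption gives $\E[\mat{z}\ts\mat{A}\mat{z}] = \trace(\mat{A}) = 0$; the inequality will then follow by bounding $\prob(\mat{z}\ts\mat{A}\mat{z}\geq\varepsilon)$, applying the same argument to $-\mat{A}$, and union-bounding (this accounts for the leading factor $2$). For $\lambda>0$, Markov's inequality gives $\prob(\mat{z}\ts\mat{A}\mat{z}\geq\varepsilon)\leq e^{-\lambda\varepsilon}\E[e^{\lambda\mat{z}\ts\mat{A}\mat{z}}]$, and the task reduces to bounding the MGF.

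To bound $\E[e^{\lambda \mat{z}\ts\mat{A}\mat{z}}]$, I would use the standard decoupling lemma for diagonal-free Rademacher chaos: introducing an independent Rademacher copy $\mat{z}'$ and applying Jensen's inequality to the conditional expectation yields $\E[e^{\lambda\mat{z}\ts\mat{A}\mat{z}}]\leq \E[e^{c\lambda\mat{z}\ts\mat{A}\mat{z}'}]$ for an absolute constant $c$ (typically $c=4$, which is consistent with the constants in the stated bound). Conditioning on $\mat{z}'$ makes $\mat{z}\ts\mat{A}\mat{z}' = \sum_i z_i(\mat{A}\mat{z}')_i$ a sum of independent bounded symmetric variables, so Hoeffding's lemma coordinate-wise gives $\E_{\mat{z}\mid\mat{z}'}[e^{c\lambda\mat{z}\ts\mat{A}\mat{z}'}]\leq \exp(c^2\lambda^2\|\mat{A}\mat{z}'\|_2^2/2)$.

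The remaining step is to bound the outer expectation $\E_{\mat{z}'}[\exp(c^2\lambda^2 \mat{z}'\ts\mat{A}^2\mat{z}'/2)]$. Since $\mat{A}^2$ is positive semidefinite, we have $\trace(\mat{A}^2)=\|\mat{A}\|_F^2$ and $\|\mat{A}^2\|_2=\|\mat{A}\|_2^2$; one can dominate $\|\mat{A}\mat{z}'\|_2^2$ deterministically by $\|\mat{A}\|_F^2$ plus a mean-zero quadratic form that is further bounded via a bootstrap/self-improvement of the chaos MGF valid in the restricted range $|\lambda|\|\mat{A}\|_2\lesssim 1$. Combining these pieces yields an estimate of the form $\E[e^{\lambda\mat{z}\ts\mat{A}\mat{z}}]\leq \exp(\alpha \lambda^2 \|\mat{A}\|_F^2)$ for $|\lambda|\|\mat{A}\|_2$ below an explicit threshold, i.e.\ a sub-exponential (Bernstein) MGF bound with variance proxy of order $\|\mat{A}\|_F^2$ and scale parameter of order $\|\mat{A}\|_2$.

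Finally, I would optimize the Chernoff exponent $-\lambda\varepsilon + \alpha\lambda^2\|\mat{A}\|_F^2$ subject to the admissibility constraint on $\lambda$. This is the classical Bernstein trade-off: the unconstrained optimum gives the sub-Gaussian rate $\exp(-\varepsilon^2/(4\alpha\|\mat{A}\|_F^2))$ when $\varepsilon$ is small, while the boundary of the admissible range yields the sub-exponential rate $\exp(-\text{const}\cdot\varepsilon/\|\mat{A}\|_2)$ when $\varepsilon$ is large. Taking the worse of the two in a single unified bound produces the denominator $8\|\mat{A}\|_F^2 + 8\varepsilon\|\mat{A}\|_2$ up to the stated constants. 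The main obstacle is Step~3, namely pinning down the MGF of $\mat{z}'\ts\mat{A}^2\mat{z}'$ with the correct scaling in both $\|\mat{A}\|_F$ and $\|\mat{A}\|_2$, since $\mat{A}^2$ is not diagonal-free and therefore not directly amenable to the centering argument used for $\mat{A}$; the bootstrap argument and the explicit constants involved are where most of the care is required.
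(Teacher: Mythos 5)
The paper itself offers no proof of Theorem~\ref{thm:kressner}: it is imported verbatim from \cite{cortinovis2020randomized}, so there is no internal argument to compare against, and your proposal has to be judged against the cited source. Your route---Chernoff bound on the centered chaos (the zero-diagonal hypothesis giving $\E[\mat{z}\ts\mat{A}\mat{z}]=\trace(\mat{A})=0$ and the factor $2$ from applying the one-sided bound to $\pm\mat{A}$), de la Pe\~na--Montgomery-Smith decoupling with constant $4$, conditional Hoeffding to reduce to the MGF of $\|\mat{A}\mat{z}'\|_2^2$, and a final Bernstein-type optimization over $\lambda$---is the standard Hanson--Wright argument and is essentially how the bound is established in the original reference. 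The structure is sound and does produce a tail of the form $2\exp\bigl(-\varepsilon^2/(a\|\mat{A}\|_F^2+b\varepsilon\|\mat{A}\|_2)\bigr)$.

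Two caveats. First, the step you correctly identify as the crux---bounding $\E_{\mat{z}'}[\exp(s\,\mat{z}'^T\mat{A}^2\mat{z}')]$---is only planned, not executed, and the ``bootstrap'' you propose is the delicate way to do it: after peeling off the deterministic diagonal contribution $\trace(\mat{A}^2)=\|\mat{A}\|_F^2$ (deterministic because $z_i'^2=1$), you are left with a diagonal-free chaos in $\mat{A}^2$ whose MGF you would bound by the very inequality under construction, and closing that self-reference requires an a priori bound and careful tracking of how $\|\cdot\|_F$ and $\|\cdot\|_2$ transform under squaring and diagonal removal. The cleaner standard resolution is a comparison with Gaussian chaos followed by the exact identity $\E_{\mat{g}}[\exp(s\|\mat{A}\mat{g}\|_2^2)]=\prod_i(1-2s\sigma_i^2)^{-1/2}$ for $s<1/(2\|\mat{A}\|_2^2)$, which yields $\exp(Cs\|\mat{A}\|_F^2)$ on the admissible range. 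Second, your sketch establishes the right functional form but does not verify the specific constants $8$ and $8$ appearing in \eqref{eqn:kressnerBound}; a generic version of this argument produces unspecified absolute constants, so reproducing the theorem exactly as stated requires tracking constants explicitly through the decoupling and comparison steps.
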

Cortinovis and Kressner subsequently use Theorem \ref{thm:kressner} to derive error bounds for single-level estimates. We use the same proof technique to extend their bounds to multilevel methods. 

\begin{theorem}\label{thm:dethm}
    Let $\widehat{\mat{A}}\in\mathbb{R}^{d\times d}$ be a nonzero symmetric matrix. Let $\{\mat{A}_k\}_{k=1}^L$ be symmetric matrices such that $\widehat{\mat{A}} = \sum_{k=1}^L \mat{A}_k$, and for $1\leq k\leq L$ let $\mat{B}_k$ equal $\mat{A}_k$ but with the diagonal entries set to zero. For sample sizes $\mat{m} = \{m_k\}_{k=1}^L$, let $\Gamma_{\mat{m}}$ be defined as in \eqref{eqn:multiEstimator2}. Then for all $\varepsilon > 0$, 
    \begin{equation}\label{eqn:multiBound}
        \prob\left(|\Gamma_{\mat{m}} - \trace(\widehat{\mat{A}})|\geq \varepsilon\right) \leq 
        2\exp\left(\frac{-\varepsilon^2/8}
        {\sum_{k=1}^L\|\mat{B}_k\|_F^2/m_k + \varepsilon \displaystyle{\max_{1\leq k \leq L}}\|\mat{B}_k\|_2/m_k
        }\right).
    \end{equation}
    Furthermore, for $1\leq k \leq L$ let $V_k = \|\mat{B}_k\|_F^2 + \varepsilon\|\mat{B}_k\|_2$ and let $C_k$ represent the cost of sampling from $\mat{B}_k$. Then if $m_k \geq \mu\sqrt{V_k/C_k}$ where 
     \begin{equation}\label{eqn:demu}
             \mu = 8\varepsilon^{-2}\log(2/\delta)\sum_{k=1}^L\sqrt{V_kC_k},
     \end{equation}
    it follows that $\prob\left(|\Gamma_{\mat{m}} - \trace(\widehat{\mat{A}})|\geq \varepsilon\right) \leq \delta$.
\end{theorem}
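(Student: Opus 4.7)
The plan is to reduce to Theorem~\ref{thm:kressner} by assembling all the independent Rademacher samples into a single long vector and applying the concentration bound once to a carefully chosen block-diagonal matrix.

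First I would subtract the mean term by term. Since $\mat{z}$ is Rademacher, $z_i^2 = 1$, so $\mat{z}\ts \mat{A}_k \mat{z} - \trace(\mat{A}_k) = \mat{z}\ts \mat{B}_k \mat{z}$ where $\mat{B}_k$ is $\mat{A}_k$ with zeroed diagonal. Using $\trace(\widehat{\mat{A}}) = \sum_k \trace(\mat{A}_k)$ and independence of the samples, one gets
\begin{equation*}
    \Gamma_{\mat{m}} - \trace(\widehat{\mat{A}}) = \sum_{k=1}^L \sum_{i=1}^{m_k} \frac{1}{m_k}\,\mat{z}^{(i,k)T}\mat{B}_k\mat{z}^{(i,k)}.
\end{equation*}
Next I would stack the $\mat{z}^{(i,k)}$ vertically into a single Rademacher vector $\widetilde{\mat{z}}$ of length $d\sum_k m_k$, and define the block-diagonal matrix $\widetilde{\mat{B}}$ whose blocks are $m_k^{-1}\mat{B}_k$, each repeated $m_k$ times. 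Then $\widetilde{\mat{B}}$ is symmetric with all-zero diagonal, $\widetilde{\mat{z}}\ts\widetilde{\mat{B}}\widetilde{\mat{z}}$ equals the double sum above, and the block-diagonal structure gives $\|\widetilde{\mat{B}}\|_F^2 = \sum_k \|\mat{B}_k\|_F^2/m_k$ and $\|\widetilde{\mat{B}}\|_2 = \max_k \|\mat{B}_k\|_2/m_k$. Applying Theorem~\ref{thm:kressner} to $\widetilde{\mat{z}}\ts\widetilde{\mat{B}}\widetilde{\mat{z}}$ yields \eqref{eqn:multiBound} immediately.

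For the second statement, I would bound the denominator in the exponent of \eqref{eqn:multiBound} above by $\sum_k V_k / m_k$, using $\max_k \|\mat{B}_k\|_2/m_k \le \sum_k \|\mat{B}_k\|_2/m_k$ and the definition $V_k = \|\mat{B}_k\|_F^2 + \varepsilon \|\mat{B}_k\|_2$. Substituting $m_k \ge \mu\sqrt{V_k/C_k}$ collapses this to
\begin{equation*}
    \sum_{k=1}^L \frac{V_k}{m_k} \le \frac{1}{\mu}\sum_{k=1}^L \sqrt{V_k C_k},
\end{equation*}
so with $\mu$ chosen as in \eqref{eqn:demu} the exponent is at most $-\log(2/\delta)$, giving failure probability at most $\delta$.

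The main obstacle is really just the bookkeeping in the first step: one must verify that the block-diagonal matrix $\widetilde{\mat{B}}$ keeps all the hypotheses of Theorem~\ref{thm:kressner} intact (symmetric, zero diagonal, nonzero) and that its Frobenius and operator norms produce precisely the terms appearing in \eqref{eqn:multiBound}. Once that reduction is in place, the sample-complexity claim is a short algebraic manipulation, and the use of the slightly loose bound $\max_k \le \sum_k$ keeps the derivation clean at negligible cost to the constants.
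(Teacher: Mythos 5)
Your proposal is correct and follows essentially the same route as the paper: both construct the block-diagonal matrix with $m_k$ copies of $\mat{B}_k/m_k$, apply Theorem~\ref{thm:kressner} once to obtain \eqref{eqn:multiBound}, and then use the relaxation $\max_k \|\mat{B}_k\|_2/m_k \le \sum_k \|\mat{B}_k\|_2/m_k$ together with the choice of $\mu$ for the sample-complexity claim. The only difference is that you spell out the centering step and the final algebra in slightly more detail than the paper does, which is fine.
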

\begin{proof}
    Let $m = \sum_{k=1}^L m_k$, and let $\mat{B}$ be a block diagonal matrix in $\mathbb{R}^{md\times md}$ with $m_k$ copies of $\mat{B}_k/m_k$ as its diagonal blocks. The matrix $\mat{B}$ has zero diagonal and satisfies 
    \begin{align*}
        \|\mat{B}\|_F^2 &= \sum_{k=1}^L\|\mat{B}_k\|_F^2/m_k, \\
        \|\mat{B}\|_2 &= \max_{1\leq k\leq L} \|\mat{B}_k\|_2/m_k. 
    \end{align*}
    The first result follows by applying Theorem \ref{thm:kressner} to $\mat{B}$. The second follows by using the relaxation $\max_{1\leq k \leq L}\|\mat{B}_k\|_2/m_k \leq \sum_{k=1}^L \|\mat{B}_k\|_2/m_k$ and setting the failure probability in \eqref{eqn:multiBound} to $\delta$. 
\end{proof}

When using the sampling strategy proposed in Theorem \ref{thm:dethm}, the total cost of the multilevel estimator for a given pair $(\varepsilon,\delta)$ can be approximated as
\begin{equation}
    C = 8\varepsilon^{-2}\log(2/\delta)\left(\sum_{k=1}^L\sqrt{V_kC_k}\right)^2.
\end{equation}
This expression closely resembles the one in \eqref{eqn:totalCost}, with the caveat that $V_k$ and $\varepsilon$ refer to different quantities in these two equation. The single-level estimator, by comparison, guarantees an error of $\varepsilon$ with failure probability $\delta$ at a cost of $8\varepsilon^{-2}V_{\text{tot}}C_{\text{tot}}$, where $V_{\text{tot}} = \|\mat{B}\|_F^2 + \varepsilon\|\mat{B}\|_2$ and $C_{\text{tot}}$ is the cost of sampling from $\mat{B}$. 

In short, multilevel estimators can be expected not only to have smaller variances than their single-level counterparts, but better $(\varepsilon,\delta)$-type error bounds as well. One limitation of Theorem \ref{thm:dethm} is that since we do not know $\|\mat{B}_k\|_F$ or $\|\mat{B}_k\|_2$ in advance, it does not directly give practical advice on how to choose the number of samples. More work must be done to derive error guarantees for individual functions of interest, as is done in \cite{han2017approximating} or \cite{ubaru2017fast}, but we leave this matter for a future study.

\section{Numerical experiments} \label{sec:experiments}
In this section we conduct several experiments to examine the behavior of our multilevel estimator, particularly in comparison to single-level methods. All experiments were conducted using MATLAB 2020b on an Intel Core i7 3.5GHz machine, and the code used to produce all figures and tables is available at \url{https://github.com/erhallma/multilevel-trace-estimation/}. 

Table \ref{fig:test_matrices} contains a list of the matrices used in our experiments. Most come from the SuiteSparse database \cite{suiteSparse}. Matrices ca-GrQc and wiki-Vote are the exceptions, which were obtained from the Stanford Large Network Dataset Collection\footnote{See \url{https://snap.stanford.edu/data/}.}. We examine four functions in particular: $f(x) = \sqrt{x}$ (for estimating the nuclear norm), $f(x) =\log (x)$ (log determinant), $f(x) =\exp(x)$ (Estrada index), and $f(x) =x^3$ (triangle counting). For information on practical applications, we refer the reader to \cite{ubaru2017applications} or \cite{han2017approximating} and the references therein. 

\begin{table}
    \centering
       \begin{tabular}{|l|c|c|c|}\hline
       Matrix  & Application & Size & nnz \\ \hline
       thermal2& Thermal & 1228045 & 8580313\\
       thermomechTC & Thermal & 102158 & 711558 \\
       boneS01 & Model reduction & 127224 & 5516602\\
       ecology2 & 2D/3D & 999999 & 4995991\\
       ukerbe1 & 2D/3D & 5981 & 15704 \\
       dictionary28 & undirected graph & 52652 &  178076\\
       Erdos02 & undirected graph & 6927 & 16944 \\
       fe\_4elt2 & undirected graph & 11143 & 65636 \\
        California & Web search & 9664 & 16150\\
        deter3 & Linear program & 7647$\times$21777 & 44547 \\
        FA & Pajek network & 10617 & 72176 \\
        Roget & Pajek network & 1022 & 7297 \\
        ca-GrQc & undirected graph & 5242 & 28968 \\
        wiki-Vote & undirected graph & 7115 & 201524 \\\hline
    \end{tabular}
    \caption{Matrices used in our numerical experiments. All matrices with the exception of deter3 are square.}
    \label{fig:test_matrices}
\end{table}

\subsection{Automated level selection}
In section \ref{sec:levelSelection} we propose a method for choosing the levels on the basis of a pilot sample and without the need for further user input. Here we illustrate how this method behaves in practice. 

We use our multilevel method to estimate the nuclear norm of the matrix FA (see Table \ref{fig:test_matrices}), representing a directed unweighted graph with $10617$ nodes and $72176$ edges. We estimate $\trace\left((\mat{A}\ts \mat{A})^{1/2}\right)$, approximating the function $f(x) = x^{1/2}$ with a degree 300 polynomial and using a budget of 15,000 matvecs, the equivalent of 50 samples for a single-level method. In order to estimate the variances at each level for the purpose of level selection, we tke 10 samples using the degree 300 approximation. 

Figure \ref{fig:FAbehavior} shows the behavior of the multilevel method over 100 trials. In general, we make the following observations: 
\begin{itemize}
    \item The number of levels chosen is highly variable. The median trial uses twenty levels, but the number of levels ranges from as few as seven to as many as fifty-nine. 
    \item The selected levels tend to appear in a smaller number of clusters. Table \ref{fig:level_example} shows one fairly typical case using eighteen levels. Aside from the consecutive levels 1--11, the method also selects the smaller clusters (45,46,47) and (63,65) in this example. 
    \item Despite the variability in the {\it number} of levels selected, the budget allocation by level is fairly consistent between trials. For example, a typical trial spends around 60-70 percent of its computational budget on polynomials of degree 50 or less.
    \item The median degree of the second most expensive level is 86 over the 100 trials, and the maximum degree is 137. Thus although a high-degree polynomial may be needed to obtain a certain approximation accuracy, the multilevel method devotes most of its effort to estimating terms of significantly lower degree. 
\end{itemize}

\begin{figure}[H]
    \centering
    	\begin{minipage}{0.49\textwidth}
		\includegraphics[width=\textwidth]{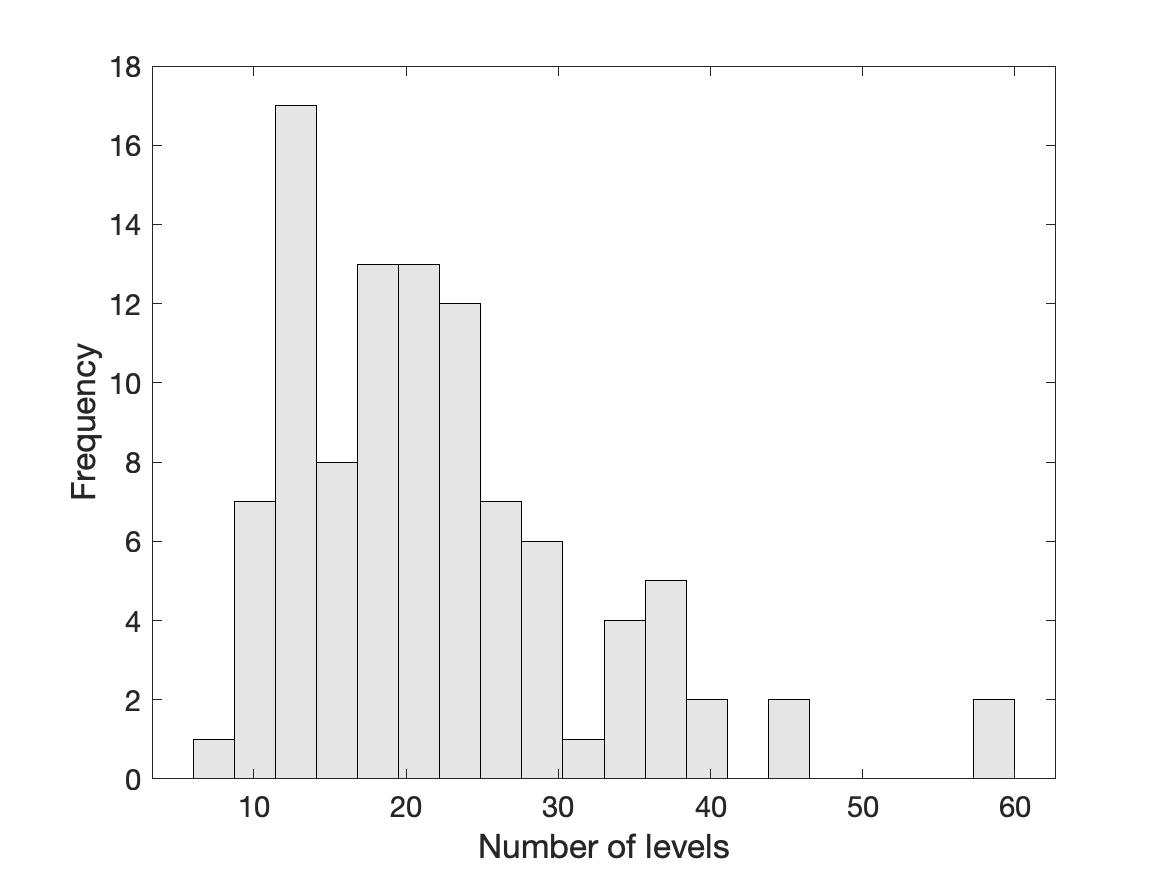}
	\end{minipage}
	\begin{minipage}{0.49\textwidth}
		\includegraphics[width=\textwidth]{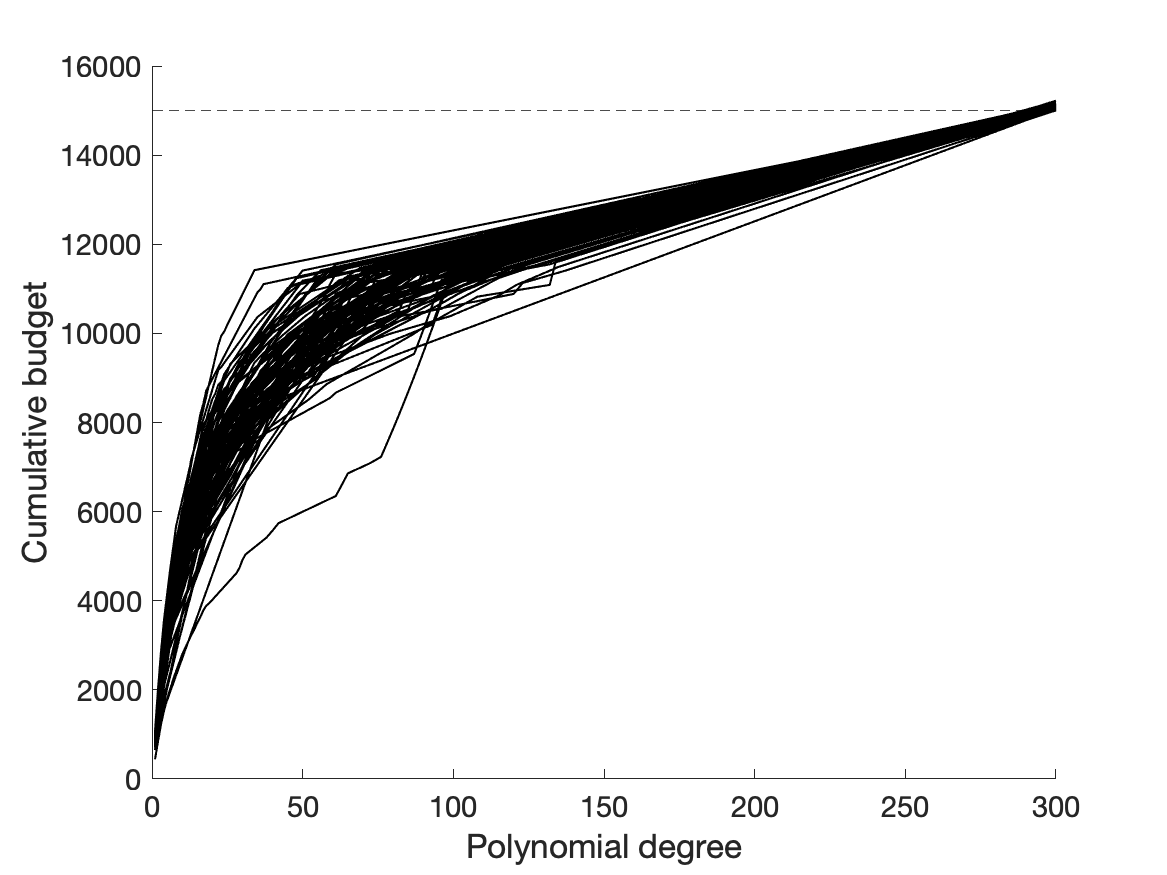}
	\end{minipage}
    \caption{Behavior of automated level selection over 100 trials. Left: number of levels chosen. Right: budget allocation. }
    \label{fig:FAbehavior}
\end{figure}

\begin{table}
    \centering
    \begin{tabular}{|c|c|c|c|c|c|c|c|c|c|c|}\hline
        Level  & 1 & 2 & 3 & 4 & 5 & 6 & 7 & 8 & 9 \\\hline
        Samples & 776 & 330 & 180 & 134 & 90 & 68 & 52 & 44 & 33\\\hline\hline
        Level & 10 & 11 & 45 & 46 & 47 & 63 & 65 & 119 & 300\\\hline
        Samples & 21 & 22 & 94 & 2 & 2 & 11 & 3 & 11 & 12\\\hline
    \end{tabular}
    \caption{The levels and sampling numbers from a fairly typical trial for a nuclear norm estimation problem. The multilevel method used 15,070 matvecs given a budget of 15,000. }
    \label{fig:level_example}
\end{table}

We then compare the performance of the multilevel method with automated level selection against the single-level estimator, as well as the multilevel estimator with two different sets of prescribed levels. The first of these uses the three levels \{3, 30, 300\}, the sort of selection one might make with no other knowledge of the system. The second uses the seventeen levels \{1,2,\ldots,15, 29, 300\}. This latter choice was informed by using automated selection on a pilot of 100 samples, so we expect it to be reasonably close to the optimal choice for this problem. 

Results are shown in Figure \ref{fig:level_ests}, where 100 trials are run for each method. As expected, the 17-level method is the most accurate with a standard error of approximately 0.47. Automated level selection performs about as well as the 3-level method, with a standard error of approximately 0.54. The single-level approximation has a standard error of about 1.53, lagging significantly behind all of the multilevel variants. 

These results suggest that choosing the levels on the basis of a pilot sample can work well in practice despite the variation in exactly which levels are chosen. It does not appear to be necessary to choose too many levels, as even the three-level method showed significant improvement over the single-level method. In practice, we recommend erring on the side of using too few levels rather than too many since taking a larger number of samples at each level will make the variance estimates more accurate.

\begin{figure}[H]
    \centering
    \includegraphics[width=0.5\textwidth]{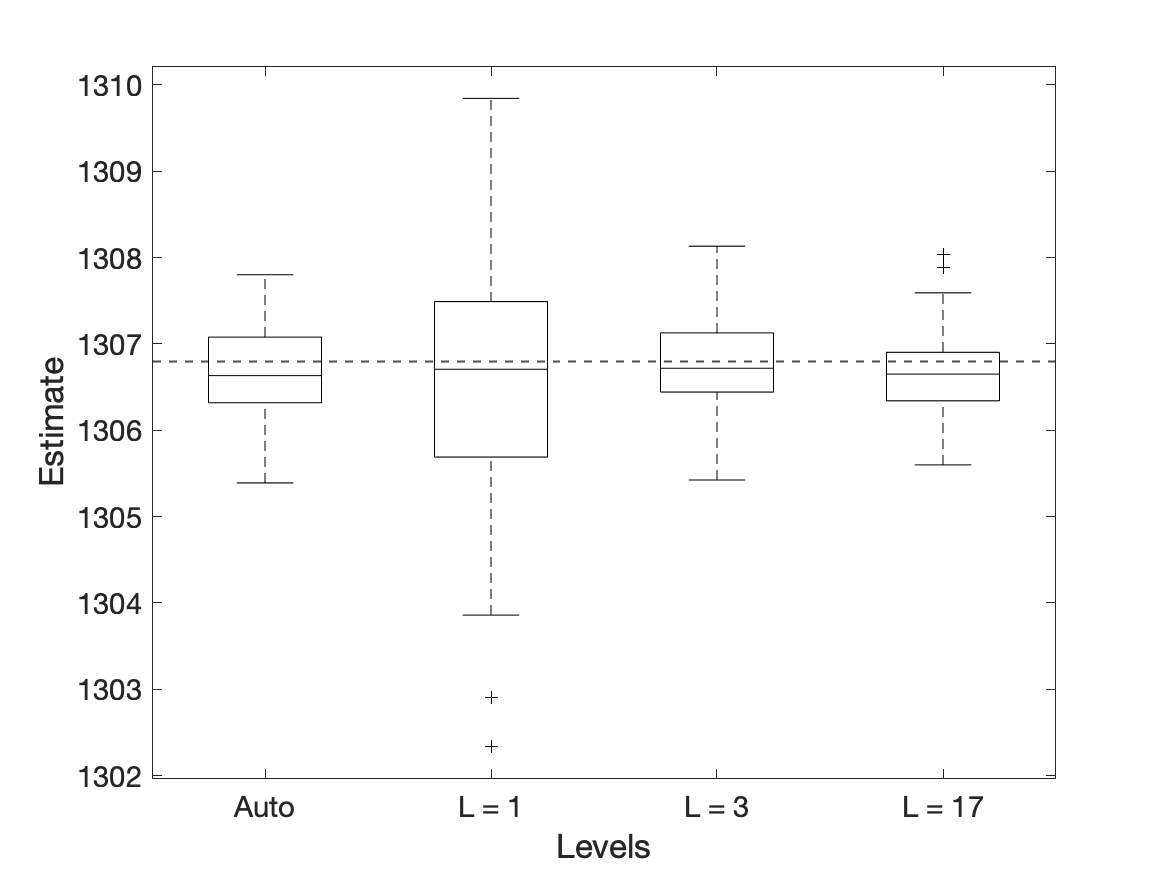}
    \caption{Nuclear norm estimates over 100 trials, comparing automated level selection with using a fixed set of levels.}
    \label{fig:level_ests}
\end{figure}

\subsection{Degree of approximating polynomial}
When using stochastic trace estimation, one faces the problem of deciding how to set the degree of the approximating polynomial $p_n$. Ideally, the degree $n$ and number of samples $m$ should be chosen so that the errors $|\Gamma_m - \trace(p_n(\mat{A}))|$ and $|\trace(p_n(\mat{A})) - \trace(f(\mat{A}))|$ are similar in magnitude\textemdash a large difference between the two suggests a waste of computation, either from drawing too many samples or from using too accurate a polynomial approximation. 

In this experiment, we explore the behavior of single-level and multilevel methods as the degree of the approximating polynomial changes. We again estimated the nuclear norm of the matrix FA, this time allowing the degree $n$ to range from 25 to 350. The single-level method used 50 samples for each trial, and the multilevel method used the equivalent computational budget (i.e., $50n$ matvecs for a degree $n$ polynomial). 

Results are shown in Figure \ref{fig:FAdegreeTest}, where we ran 100 trials for each method and each polynomial degree $n$. The plot on the left shows the approximate standard errors at each degree, as well as the error $|\trace(p_n(\mat{A})) - \trace(f(\mat{A}))|$ due to the polynomial approximation for reference. For the single-level method this quantity is essentially constant, which is to be expected since we take the same number of samples at each degree. The multilevel method outperforms the single-level method even on the coarsest approximation, and continues improving as the approximation degree increases. The reason for this is that as the computational budget increases, the multilevel method devotes most of its effort to taking more samples at the lower levels. The single-level method, by contrast, takes the same number of samples as before but just at higher degrees. 

The plot on the right shows the median relative approximation errors over 100 trials, along with the 25th and 75th percentile errors. For smaller degrees, the accuracy of both single-level and multilevel methods is constrained by the accuracy of the polynomial approximation rather than the number of samples. Somewhere between $n=150$ and $n=200$, the single-level method becomes constrained by the number of samples and shows no further improvement as the degree increases. The multilevel method remains close to optimal until around $n=250$ and continues to improve afterwards. 

One implication of these results is that the advantage of using multilevel methods will be greater when more accurate estimates are desired (and therefore, when higher-degree polynomial approximations are needed). A second implication is that 
multilevel methods are significantly less sensitive to the choice of degree than single-level methods, whose cost for a fixed number of samples grows proportionally to $n$. Various authors \cite{dudley2020monte,ubaru2017fast,han2017approximating} derive error bounds for specific functions that make recommendations for the degree $n$ and number of samples $m$. Although these theoretical bounds are not necessarily tight (particularly for $m$), our results suggest that when using multilevel methods there is little downside to choosing $n$ conservatively.

\begin{figure}[H]
    \centering
    \begin{minipage}{0.49\textwidth}
		\includegraphics[width=\textwidth]{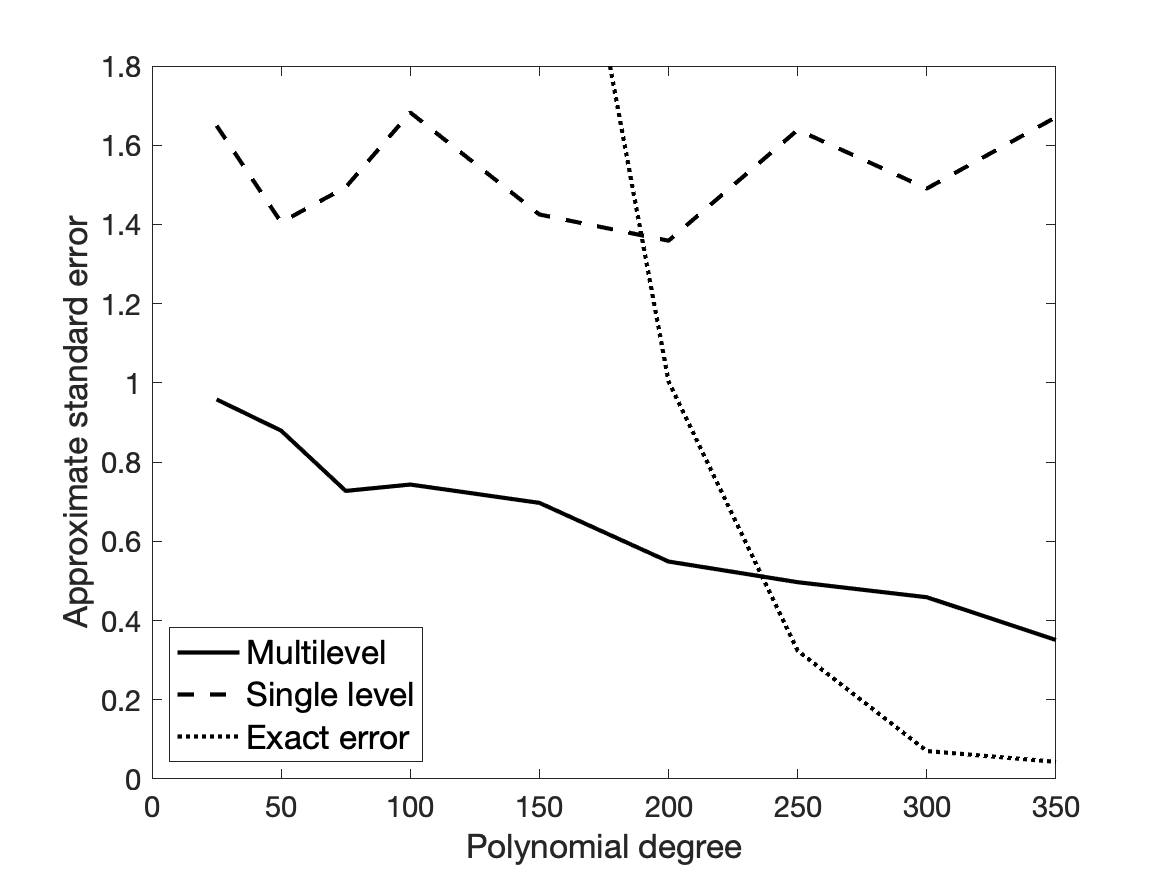}
	\end{minipage}
	\begin{minipage}{0.49\textwidth}
		\includegraphics[width=\textwidth]{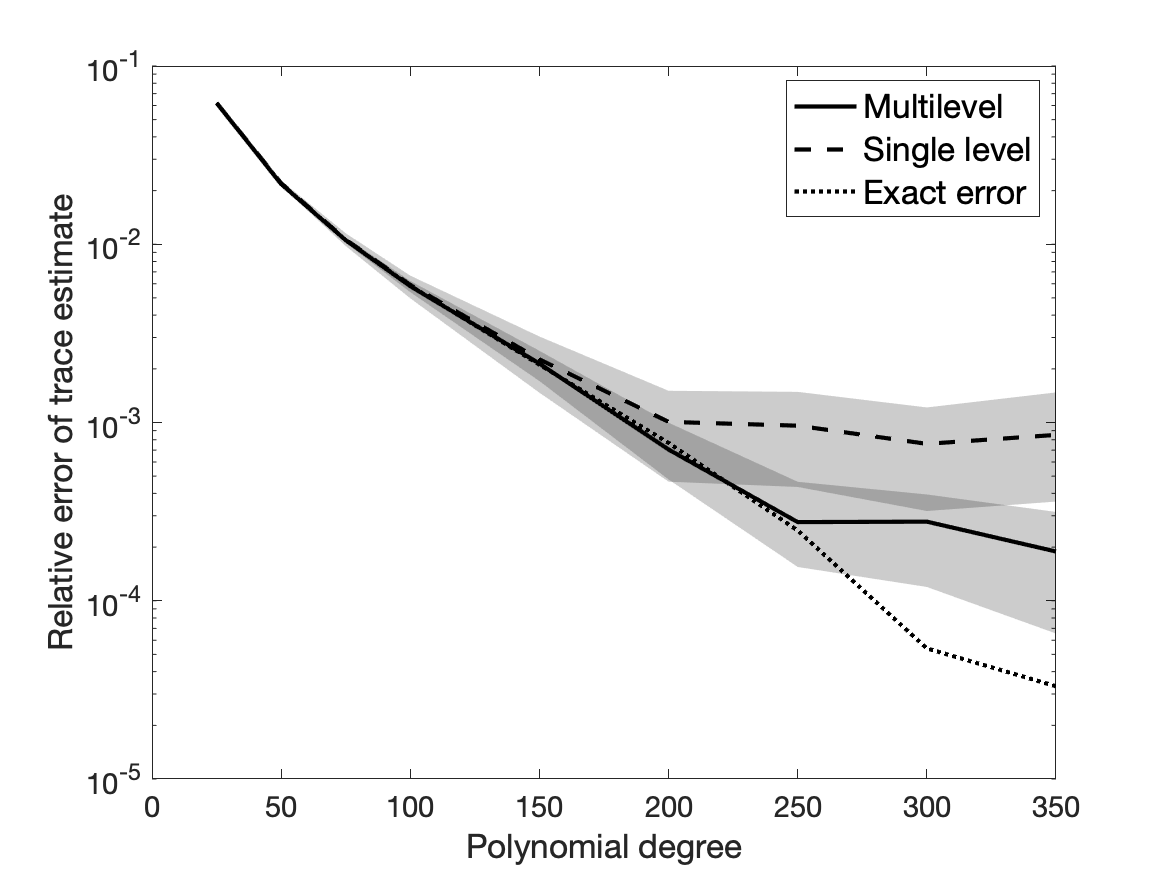}
	\end{minipage}
    \caption{Performance as the approximation degree $n$ changes. Left: approximate standard errors. Right: median relative errors over 100 trials. }
    \label{fig:FAdegreeTest}
\end{figure}





\subsection{SuiteSparse test cases}
Here we show results for the multilevel method and single-level method on a variety of test cases drawn from the SuiteSparse matrix collection. We generally set the degree $n$ large enough to allow for 3-4 digits of precision in the estimate. 

The most promising results for the multilevel method are when estimating the nuclear norm, shown in Figure \ref{fig:nuclear_real}. The singular values of all of these test matrices are available in the SuiteSparse database, and the exact norms are computed using these values. For matrices with low numerical rank such as California, we follow the procedure recommended in \cite{ubaru2017fast} and compute the nuclear norm of $\mat{A}\ts\mat{A}+\lambda \mat{I}$, where $\lambda$ is a small regularization term. This procedure does not change the norm by much, but it does circumvent the problem of the square root function being nondifferentiable at $x=0$. 
The single-level method takes 50 samples for each test case, and given an equivalent computational budget our multilevel method delivers estimates whose standard errors are smaller by a factor of 2.5-4.5. Since the accuracy of an estimate scales with the square root of the number of samples, these results suggest that a multilevel approach could deliver estimates of quality comparable to the single level method while lowering the cost by as much as an order of magnitude.

\begin{table}
    \centering
    \begin{tabular}{|l|c|c|c|c|c|c|}\hline
        \multirow{2}{*}{Matrix} & \multirow{2}{*}{Exact norm} & \multirow{2}{*}{$n$} & \multicolumn{2}{|c|}{Multilevel} & \multicolumn{2}{|c|}{Single Level}\\\cline{4-7}
        & & & Estimate & std & Estimate & std\\\hline
        California & 3803.74 & 100 & 3800.78 & 3.46 & 3802.04 & 10.82\\
        FA & 1306.80 & 300 & 1306.55 & 0.44 & 1305.26 & 1.48\\
        Erdos02 & 3478.23 & 100 & 3481.65 & 5.03 & 3492.99 & 15.31\\
        fe\_4elt2 & 22677.4 & 70 & 22677.1 & 6.68 & 22726.3 & 30.05\\
        deter3 & 16518.1 & 70 & 16514.5 & 3.19 & 16501.0 & 11.45 \\
        uberke1 & 7641.44 & 20 & 7637.79 & 4.69 & 7620.45 & 11.75  \\\hline
    \end{tabular}
    \caption{Nuclear norm estimates with $m = 50$ and $m_{\text{pilot}} = 10$.}
    \label{fig:nuclear_real}
\end{table}

For the test cases estimating the log determinant (Figure \ref{fig:logdet_real}), the exact values are taken from \cite{boutsidis2017randomized}, in which the values are computed using a Cholesky factorization. Here, results are somewhat more modest--the single level method uses 100 samples for each test case, and for the same computational budget the multilevel method gives estimates whose standard errors are smaller by a factor of 1.5-3.5. 

\begin{table}
    \centering
    \begin{tabular}{|l|c|c|c|c|c|c|}\hline
        \multirow{2}{*}{Matrix} & \multirow{2}{*}{Exact logdet} & \multirow{2}{*}{$n$} & \multicolumn{2}{|c|}{Multilevel} & \multicolumn{2}{|c|}{Single Level}\\\cline{4-7}
        & & & Estimate & std & Estimate & std\\\hline
        thermomechTC & -546787 & 75 & -546784 & 9.36 & -546805 & 30.9 \\
        boneS01 & 1.1039e6 & 150 & 1.1040e6 & 25.7 & 1.1039e6 & 77.4\\
        ecology2 & 3.3943e6 & 60 & 3.3933e6 & 158 & 3.3935e6 & 229 \\
        thermal2 & 1.3869e6 & 100 & 1.3864e6 & 182 & 1.3870e6 & 266 \\\hline
    \end{tabular}
    \caption{Log-determinant estimates with $m = 30$ and $m_{\text{pilot}} = 5$.}
    \label{fig:logdet_real}
\end{table}

For the test cases estimating the Estrada index ($f(x) = \exp(x)$, shown in Figure \ref{fig:estrada_real}), the exact values are computed directly. Here, the multilevel method shows little to no improvement over the single-level method. At least part of the reason is that the spectra of these matrices are typically contained in a small interval, and so a small degree $n$ suffices to approximate the exponential function to high accuracy. We observe that the multilevel method typically uses just two levels in this case, the smaller of which was generally around $n/2$. Since the ratio between the lowest and highest levels is small, the multilevel method had little chance to improve over the single-level method. 

\begin{table}
    \centering
    \begin{tabular}{|l|c|c|c|c|c|c|}\hline
        \multirow{2}{*}{Matrix} & \multirow{2}{*}{Exact index} &  \multirow{2}{*}{$n$} & \multicolumn{2}{|c|}{Multilevel} & \multicolumn{2}{|c|}{Single Level}\\\cline{4-7}
        & & & Estimate & std & Estimate & std\\\hline
        fe\_4elt2  & 2.2737e5 & 15 & 2.272e5 & 5.88e2 & 2.261e5 & 8.89e2 \\
        Erdos02  & 1.6705e11 & 20 & 2.206e11 & 2.31e10 & 2.303e11 & 2.50e10\\
        Roget & 2.3797e5 & 20 & 2.113e5 & 1.53e4 & 2.378e5 & 2.37e4\\\hline
    \end{tabular}
    \caption{Estrada index estimates with $m=100$ and $m_{\text{pilot}} = 10$.}
    \label{fig:estrada_real}
\end{table}

In the case of the Estrada index, we also note that the standard errors for our estimates are quite large. This is because the Estrada index of a matrix is dominated by its largest eigenvalues, to a far greater extent than the nuclear norm or log determinant. As a result, it will be particularly helpful to apply the variance reduction methods of \cite{meyer2021hutch++} when estimating the Estrada index. 

\subsection{Graph triangle counting} \label{sec:triangle}
If $\mat{A}$ is the adjacency matrix for an undirected graph, the number of triangles in the graph is known to be equal to $\trace(\mat{A}^3)/6$. We could apply multilevel techniques to estimate this quantity, but it is simpler to just use a control variate instead. For any real numbers $a_1$ and $a_2$, we have that 
\begin{align*}
        \trace(\mat{A}^3) &= \trace(\mat{A}^3 -a_2\mat{A}^2 - a_1\mat{A}) + a_1 \trace(\mat{A}) + a_2\trace(\mat{A}^2)\\
            &= \mathbb{E}[\mat{z}\ts(\mat{A}^3 -a_2\mat{A}^2 - a_1\mat{A})\mat{z}] + a_2 \nnz(\mat{A}).
\end{align*}

The quantities $a_1$ and $a_2$ can then be chosen to minimize the standard deviation of $\mat{z}\ts(\mat{A}^3 -a_2\mat{A}^2 - a_1\mat{A})\mat{z}$. These quantities could be chosen {\it a priori} using the Chebyshev expansion of $x^3$, but for our experiments we compute and store the values $\mat{z}^{(i)T}\mat{A}^j\mat{z}^{(i)}$ for $1\leq i \leq m$ and $1\leq j \leq 3$, then find $a_1$ and $a_2$ through linear regression. The added cost is minimal\textemdash in particular, no extra matvecs with $\mat{A}$ are required. 

We test this variance reduction method on ca-GrQc and wiki-Vote, two standard test graphs. Results are shown in Figure \ref{fig:triangles}, where we report the median relative error over 100 trials along with the 25th and 75th percentile errors. We find that the benefit of using control variates is fairly modest, typically reducing the relative error by around 30\% in the first case and 20\% in the second. Nonetheless, this method is both simple to implement and inexpensive, so there appears to be little drawback to using it.

\begin{figure}[H]
	\centering
	\begin{minipage}{0.49\textwidth}
		\includegraphics[width=\textwidth]{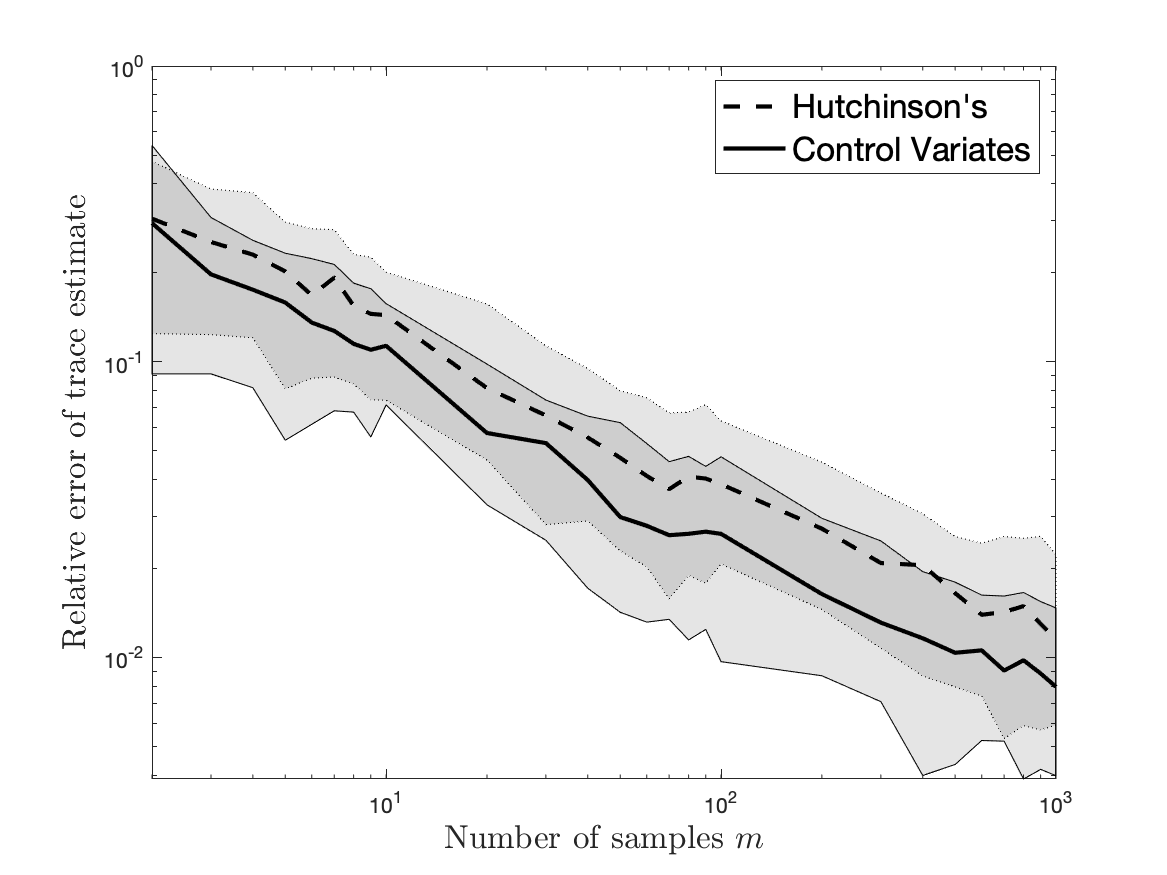}
	\end{minipage}
	\begin{minipage}{0.49\textwidth}
		\includegraphics[width=\textwidth]{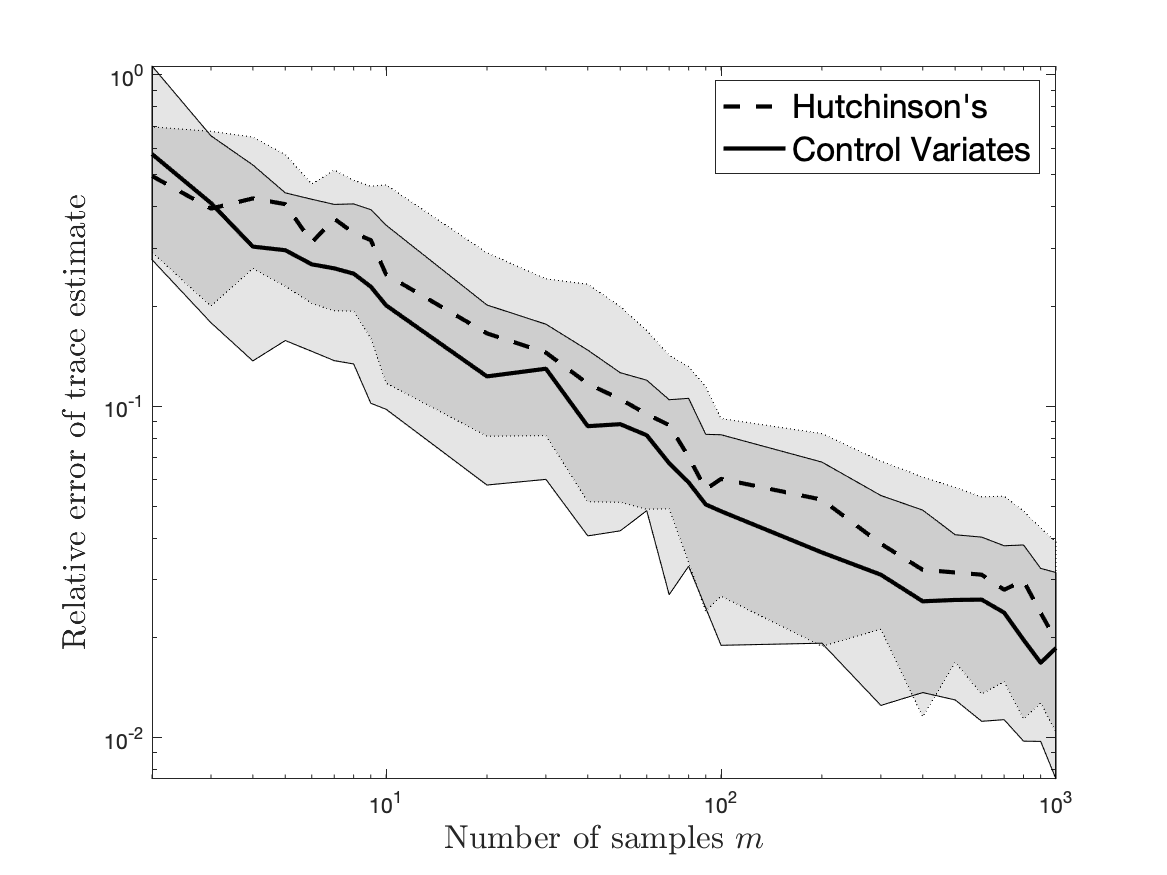}
	\end{minipage}
	\caption{Triangle counting with $f(\mat{A}) = \frac{1}{6}\mat{A}^3$. Left: ca-GrQc, an ArXiv.org collaboration network. Right: wiki-Vote, a Wikipedia administrator voting network.  }
	\label{fig:triangles}
\end{figure}

In theory, we could use these same control variates to estimate the trace of polynomials $p_n(\mat{A})$ of larger degree, such as when approximating the nuclear norm. It is simple to compute $\trace(\mat{A}^j)$ or $\trace(T_j(\mat{A}))$ for $0\leq j \leq 2$, so these low-degree terms may effectively be removed from our variables $Q_{\ell'\ell}$ in \eqref{def:qlevels}. When the degree of the matrix polynomial is large, however, the coefficients of $p_n$ will decay more slowly and so the effect of using control variates will likely be fairly small.

\section{Conclusion} \label{sec:conclusion}
In this paper, we have shown how multilevel techniques can be used to improve existing methods for stochastic trace estimation. We have derived general error bounds for our multilevel trace estimator, and through numerical experiments have demonstrated the efficacy of the multilevel estimator as compared with single-level methods. 

One avenue for further study is in deriving multilevel error guarantees that are specific to the function $f$, such as those for single-level methods in \cite{han2017approximating,ubaru2017fast,dudley2020monte}. Another possibility is to explore whether other variance reduction techniques for Monte Carlo methods might find applications in stochastic trace estimation problems: for example, tools for modeling rare events could potentially be used to determine whether a given matrix is positive definite, a problem which trace estimation is used to solve in \cite{han2017approximating}. Our hope is that this paper will encourage further exploration in these directions. 

\section*{Acknowledgements}
The authors would like to thank Michael Merritt, Alen Alexanderian, and Pierre Gremaud for their helpful remarks. 

\bibliography{refs}

\begin{thebibliography}{10}
\providecommand{\url}[1]{#1}
\csname url@samestyle\endcsname
\providecommand{\newblock}{\relax}
\providecommand{\bibinfo}[2]{#2}
\providecommand{\BIBentrySTDinterwordspacing}{\spaceskip=0pt\relax}
\providecommand{\BIBentryALTinterwordstretchfactor}{4}
\providecommand{\BIBentryALTinterwordspacing}{\spaceskip=\fontdimen2\font plus
\BIBentryALTinterwordstretchfactor\fontdimen3\font minus
  \fontdimen4\font\relax}
\providecommand{\BIBforeignlanguage}[2]{{%
\expandafter\ifx\csname l@#1\endcsname\relax
\typeout{** WARNING: IEEEtran.bst: No hyphenation pattern has been}%
\typeout{** loaded for the language `#1'. Using the pattern for}%
\typeout{** the default language instead.}%
\else
\language=\csname l@#1\endcsname
\fi
#2}}
\providecommand{\BIBdecl}{\relax}
\BIBdecl

\bibitem{ubaru2017fast}
S.~Ubaru, J.~Chen, and Y.~Saad, ``Fast estimation of tr(f({A})) via stochastic
  {L}anczos quadrature,'' \emph{SIAM Journal on Matrix Analysis and
  Applications}, vol.~38, no.~4, pp. 1075--1099, 2017.

\bibitem{golub2009matrices}
G.~H. Golub and G.~Meurant, \emph{Matrices, moments and quadrature with
  applications}.\hskip 1em plus 0.5em minus 0.4em\relax Princeton University
  Press, 2009, vol.~30.

\bibitem{meyer2021hutch++}
R.~A. Meyer, C.~Musco, C.~Musco, and D.~P. Woodruff, ``Hutch++: Optimal
  stochastic trace estimation,'' in \emph{Symposium on Simplicity in Algorithms
  (SOSA)}.\hskip 1em plus 0.5em minus 0.4em\relax SIAM, 2021, pp. 142--155.

\bibitem{giles2008multilevel}
M.~B. Giles, ``Multilevel {M}onte {C}arlo path simulation,'' \emph{Operations
  research}, vol.~56, no.~3, pp. 607--617, 2008.

\bibitem{anderson2012multilevel}
D.~F. Anderson and D.~J. Higham, ``Multilevel monte carlo for continuous time
  markov chains, with applications in biochemical kinetics,'' \emph{Multiscale
  Modeling \& Simulation}, vol.~10, no.~1, pp. 146--179, 2012.

\bibitem{geraci2017multifidelity}
G.~Geraci, M.~S. Eldred, and G.~Iaccarino, ``A multifidelity multilevel monte
  carlo method for uncertainty propagation in aerospace applications,'' in
  \emph{19th AIAA Non-Deterministic Approaches Conference}, 2017, p. 1951.

\bibitem{ullmann2015multilevel}
E.~Ullmann and I.~Papaioannou, ``Multilevel estimation of rare events,''
  \emph{SIAM/ASA Journal on Uncertainty Quantification}, vol.~3, no.~1, pp.
  922--953, 2015.

\bibitem{han2017approximating}
I.~Han, D.~Malioutov, H.~Avron, and J.~Shin, ``Approximating spectral sums of
  large-scale matrices using stochastic {C}hebyshev approximations,''
  \emph{SIAM Journal on Scientific Computing}, vol.~39, no.~4, pp.
  A1558--A1585, 2017.

\bibitem{giles2015multilevel}
M.~B. Giles, ``Multilevel {M}onte {C}arlo methods,'' \emph{Acta Numerica},
  vol.~24, p. 259, 2015.

\bibitem{hutchinson1989stochastic}
M.~F. Hutchinson, ``A stochastic estimator of the trace of the influence matrix
  for {L}aplacian smoothing splines,'' \emph{Communications in
  Statistics-Simulation and Computation}, vol.~18, no.~3, pp. 1059--1076, 1989.

\bibitem{roosta2015improved}
F.~Roosta-Khorasani and U.~Ascher, ``Improved bounds on sample size for
  implicit matrix trace estimators,'' \emph{Foundations of Computational
  Mathematics}, vol.~15, no.~5, pp. 1187--1212, 2015.

\bibitem{trefethen2008gauss}
L.~N. Trefethen, ``Is {G}auss quadrature better than {C}lenshaw--{C}urtis?''
  \emph{SIAM review}, vol.~50, no.~1, pp. 67--87, 2008.

\bibitem{hallman2021faster}
E.~Hallman, ``Faster stochastic trace estimation with a {C}hebyshev product
  identity,'' \emph{arXiv preprint arXiv:2101.00325}, 2021.

\bibitem{cortinovis2020randomized}
A.~Cortinovis and D.~Kressner, ``On randomized trace estimates for indefinite
  matrices with an application to determinants,'' \emph{arXiv preprint
  arXiv:2005.10009}, 2020.

\bibitem{suiteSparse}
\BIBentryALTinterwordspacing
T.~A. Davis and Y.~Hu, ``The {U}niversity of {F}lorida {S}parse {M}atrix
  {C}ollection,'' \emph{ACM Trans. Math. Softw.}, vol.~38, no.~1, Dec. 2011.
  [Online]. Available: \url{https://doi.org/10.1145/2049662.2049663}
\BIBentrySTDinterwordspacing

\bibitem{ubaru2017applications}
S.~Ubaru and Y.~Saad, ``Applications of trace estimation techniques,'' in
  \emph{International Conference on High Performance Computing in Science and
  Engineering}.\hskip 1em plus 0.5em minus 0.4em\relax Springer, 2017, pp.
  19--33.

\bibitem{dudley2020monte}
E.~Dudley, A.~K. Saibaba, and A.~Alexanderian, ``Monte {C}arlo estimators for
  the {S}chatten p-norm of symmetric positive semidefinite matrices,''
  \emph{arXiv preprint arXiv:2005.10174}, 2020.

\bibitem{boutsidis2017randomized}
C.~Boutsidis, P.~Drineas, P.~Kambadur, E.-M. Kontopoulou, and A.~Zouzias, ``A
  randomized algorithm for approximating the log determinant of a symmetric
  positive definite matrix,'' \emph{Linear Algebra and its Applications}, vol.
  533, pp. 95--117, 2017.

\end{thebibliography}
\bibliographystyle{IEEEtran}

\end{document}